\newcommand{\nc}{\newcommand}
\nc{\nt}{\newtheorem}
\nc{\dmo}{\DeclareMathOperator}
\dmo{\Mod}{Mod}
\dmo{\Homeo}{Homeo}
\nc{\N}{\mathcal{N}}
\nc{\T}{\mathcal{T}}
\dmo{\Out}{Out}
\dmo{\Comm}{Comm}
\dmo{\GL}{GL}
\dmo{\SL}{SL}
\dmo{\PSL}{PSL}
\dmo{\PGL}{PGL}
\dmo{\SO}{SO}
\dmo{\Sp}{Sp}
\dmo{\PSp}{PSp}
\dmo{\vcd}{vcd}
\dmo{\rank}{rank}
\nc{\Z}{\mathbb{Z}}
\nc{\R}{\mathbb{R}}
\nc{\Q}{\mathbb{Q}}
\nc{\C}{\mathbb{C}}
\nc{\p}[1]{\medskip\paragraph{{\bf #1}}}
\nc{\margin}[1]{\marginpar{\scriptsize #1}}
\nc{\bl}{ \begin{list}{$\cdot$}{
\setlength{\leftmargin}{.5in}
\setlength{\rightmargin}{.5in}
\setlength{\parsep}{0.5ex plus .2ex minus 0ex}
\setlength{\itemsep}{0.2ex plus 0.2ex minus 0ex}
}
}
\nc{\el}{\end{list}}
\title[Abstract commensurators of Right-angled Artin groups]{Abstract commensurators of Right-angled Artin groups and Mapping class groups}
\begin{document}
	
\input{epsf.sty}

\author{Matt Clay}

\author{Christopher J. Leininger}

\author{Dan Margalit}

\address{Matt Clay \\ Department of Mathematics \\ 301 SCEN - 1 University of Arkansas \\  Fayetteville, AR 72701 \\ mattclay@uark.edu}

\address{Christopher J. Leininger\\ Dept. of Mathematics, University of Illinois at Urbana-Champaign \\ 273 Altgeld Hall, 1409 W. Green St. \\ Urbana, IL 61802\\ clein@math.uiuc.edu}

\address{Dan Margalit \\ School of Mathematics\\ Georgia Institute of Technology \\ 686 Cherry St. \\ Atlanta, GA 30332 \\  margalit@math.gatech.edu}

\thanks{The authors gratefully acknowledge support from the National Science Foundation.}

\keywords{right-angled Artin group, mapping class group, abstract commensurator}

\subjclass[2000]{Primary: 20E36; Secondary: 57M07}

\begin{abstract}
We prove that, aside from the obvious exceptions, the mapping class group of a compact orientable surface is not abstractly commensurable with any right-angled Artin group. Our argument applies to various subgroups of the mapping class group---the subgroups generated by powers of Dehn twists and the terms of the Johnson filtration---and additionally to the outer automorphism group of a free group and to certain linear groups. 
\end{abstract}

\maketitle

There are many analogies and interconnections between the theories of right-angled Artin groups on one hand and mapping class groups on the other hand.  For instance, by the work of Crisp and Wiest \cite{crispwiest}, the work of Koberda \cite{koberda}, and the work of the first two authors with Mangahas \cite{clm}, there is an abundance of injective homomorphisms from right-angled Artin groups to mapping class groups.  Also, the last two authors proved \cite{pbfree} that any two elements of the pure braid group either generate a free group or a free abelian group---a property shared by all right-angled Artin groups \cite[Theorem 1.2]{baudisch}. We are thus led to ask to what extent mapping class groups are the same as right-angled Artin groups.  

It is straightforward to see that most mapping class groups are not isomorphic to right-angled Artin groups, for instance because right-angled Artin groups are torsion free.  On the other hand, mapping class groups have finite-index subgroups that are torsion free, and so this leaves open the possibility that mapping class groups are abstractly commensurable to right-angled Artin groups, that is, that they have isomorphic finite-index subgroups.   We prove that, aside from a small number of exceptions, this is not the case.  We also extend this result to several classes of groups related to mapping class groups.  We start by recalling some definitions.

\medskip

To a finite graph $\Gamma$, we can associate a right-angled Artin group: this is the group with one generator for each vertex of $\Gamma$, and one defining relator for each edge, namely, the commutator of the two generators corresponding to the endpoints.  

Let $S_{g,n}$ denote a closed, connected, orientable surface of genus $g$ with $n$ marked points.  The mapping class group $\Mod(S_{g,n})$ is the group of homotopy classes of orientation-preserving homeomorphisms of $S_{g,n}$ preserving the set of marked points.  

As discussed in Koberda's paper \cite[Theorem 1.5]{koberda}, no finite-index subgroup of $\Mod(S_{g,n})$ injects into a right-angled Artin group if $g \geq 2$ and $(g,n) \neq (2,0)$; see also \cite{kapovichleeb}.  In particular, such mapping class groups are not abstractly commensurable with right-angled Artin groups.  The last statement has a quick proof: for a right-angled Artin group, the virtual cohomological dimension is equal to the maximal rank of a free abelian subgroup, while for mapping class groups these numbers---which are invariant under passage to finite-index subgroups---are equal if and only if  $g=0$, $g=1$, or $(g,n)=(2,0)$; see \cite[Theorem 4.1]{harer} and \cite[Theorem A]{blm}.

As for the other mapping class groups, the first two authors proved with Mangahas that $\Mod(S_{2,0})$ is not abstractly commensurable with any right-angled Artin group \cite[Proposition 7.2]{clm} and explained how to apply their method to the case of $\Mod(S_{0,n})$ with $n \geq 6$.   Our first theorem extends these results to the remaining groups $\Mod(S_{g,n})$, while at the same time giving a uniform argument for all cases.

\begin{theorem}
\label{thm:main}
Let $g,n \geq 0$ and assume that $3g+n \geq 5$.  No right-angled Artin group is abstractly commensurable with $\Mod(S_{g,n})$.
\end{theorem}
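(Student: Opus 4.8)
The plan is to argue by contradiction, after first disposing of the cases already handled. When $g\geq 2$ and $(g,n)\neq(2,0)$ there is nothing to prove, since $\vcd<\rank$ for $\Mod(S_{g,n})$ by \cite{harer} and \cite{blm}, whereas $\vcd=\rank$ in every right-angled Artin group, and both quantities are commensurability invariants. Thus I may assume $S:=S_{g,n}$ is one of $S_{2,0}$, $S_{1,n}$ with $n\geq 2$, or $S_{0,n}$ with $n\geq 5$; in each of these $\vcd(\Mod(S))=\rank(\Mod(S))=N:=3g-3+n\geq 2$. Suppose, for contradiction, that $\Gamma$ is a finite graph and $H$ is isomorphic to finite-index subgroups of both $A(\Gamma)$ and $\Mod(S)$; after passing to a further finite-index subgroup I may take $H$ to be torsion free. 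Then $\vcd(A(\Gamma))=N$, so a maximum clique of $\Gamma$ has exactly $N$ vertices and $A(\Gamma)$ has no $\Z^{N+1}$.

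Next I would run a \emph{descent} on the complexity of $S$, driven by Dehn twists. For an essential simple closed curve $c$ on $S$, in a suitable finite-index subgroup the centralizer of $T_c$ is commensurable with $\langle T_c\rangle\times\Mod(S\setminus c)\cong\Z\times\Mod(S\setminus c)$, where $\Mod(S\setminus c)$ is the (possibly disconnected) mapping class group of the cut surface. On the Artin side, the centralizer of any element of $A(\Gamma)$ is isomorphic to $\Z^k\times A(\Lambda)$ for some $k\geq 0$ and some induced subgraph $\Lambda\subseteq\Gamma$, by the standard structure of centralizers in right-angled Artin groups. Identifying $T_c$ with an element of $A(\Gamma)$ and comparing the two descriptions of the centralizer of $T_c$ in $H$, and using that $\Mod(S\setminus c)$, $A(\Lambda)$, and all their finite-index subgroups have virtually trivial center, one reads off $k=1$ and cancels the central $\Z$ to conclude that $\Mod(S\setminus c)$ is abstractly commensurable with $A(\Lambda)$; when $S\setminus c$ is disconnected, matching the resulting direct-product decompositions (using that mapping class groups of connected surfaces are virtually indecomposable, via \cite{blm}) passes the conclusion to each component. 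Iterating, the descent reduces the theorem to the two smallest cases $S_{0,5}$ and $S_{1,2}$, where $N=2$ and therefore $\Gamma$ is triangle free; along the way it also recovers the cases $S_{2,0}$ and $S_{0,n}$, $n\geq 6$, of \cite{clm}.

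For the base cases I would compare maximal-rank abelian subgroups --- the $\Z^2$'s --- and their pattern of intersections. First, $\Gamma$ is connected, since $\Mod(S)$ is one-ended for $3g+n\geq 5$ while $A(\Gamma)$ (and hence $H$) is one-ended only when $\Gamma$ is connected with at least two vertices; $\Gamma$ is not a single edge, for otherwise $A(\Gamma)\cong\Z^2$, which has the wrong growth; and $\Gamma$ is not a join, for otherwise $A(\Gamma)$ is a nontrivial direct product, contradicting that no finite-index subgroup of $\Mod(S)$ is a nontrivial direct product (by \cite{blm}). So $\Gamma$ is a connected triangle-free graph on at least three vertices that is not a join. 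I would then study the ``flat graph'' of $A(\Gamma)$, whose vertices are the commensurability classes of $\Z^2$-subgroups and whose edges record which pairs intersect in an infinite subgroup: by the structure theory of right-angled Artin groups this graph is controlled by the finite graph $\Gamma$, whereas --- by change of coordinates and the analysis of multitwists in \cite{blm} --- the analogous graph for $\Mod(S)$ is quasi-isometric to the curve complex of $S$, whose geometry cannot be produced from any finite defining graph. This is the desired contradiction.

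I expect the heart of the matter to be exactly this last step: isolating the right invariant of the flat structure of a right-angled Artin group and showing it is incompatible with $\mathcal{C}(S_{0,5})$ and $\mathcal{C}(S_{1,2})$. The care needed is that abstract commensurability sees centralizers, normalizers, and flats only up to finite index, so every statement must be phrased virtually; and the structural facts used for $\Mod(S)$ in these low-complexity cases --- virtually trivial center and virtual indecomposability --- must be extracted from the Birman--Lubotzky--McCarthy classification of abelian and reducible subgroups \cite{blm}. Once the base cases are established, the same template --- vcd/rank comparison, Dehn-twist descent, and base-case rigidity --- applies with no essential change to the subgroups of $\Mod(S)$ generated by powers of Dehn twists and to the terms of the Johnson filtration, and (replacing Dehn twists by an appropriate supply of abelian subgroups) to $\Out(F_n)$ and to the linear groups mentioned in the abstract.
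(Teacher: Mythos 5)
There is a genuine gap, and it sits exactly where you locate it yourself. Even granting that your descent via centralizers of Dehn twists can be made rigorous, it only reduces the theorem to the cases $S_{1,2}$ and $S_{0,5}$, and for these your argument is an expectation rather than a proof. The two key claims---that the graph of commensurability classes of $\Z^2$-subgroups of $\Mod(S)$, with edges recording infinite intersection, is quasi-isometric to the curve complex, and that the analogous graph for a right-angled Artin group ``cannot be produced from any finite defining graph''---are asserted without justification, and the second is not even a precise statement: for a triangle-free defining graph the relevant object (essentially an extension-graph-type complex) is, like the curve complex, an infinite, unbounded graph, so the two are not distinguished by any feature you name, and isolating an invariant that does distinguish them is precisely the hard content. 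Since $S_{1,2}$ and $S_{0,5}$ are exactly the cases not covered by prior work (\cite{clm} handles $S_{2,0}$ and explains $S_{0,n}$ for $n\geq 6$), the proposal does not establish the theorem. The descent itself also needs repair: the centralizer of $T_c$ is a central extension of (a finite-index subgroup of) the mapping class group of the cut surface by $\langle T_c\rangle$, not a direct product---for positive genus with boundary such extensions need not virtually split (compare the appendix)---so ``cancelling the central $\Z$'' must be done by quotienting by virtual centers; matching product decompositions when $S\setminus c$ is disconnected requires a uniqueness-of-decomposition statement up to commensurability, not just indecomposability of each factor; and for $g\geq 2$ the inequality goes the other way (the virtual cohomological dimension exceeds the maximal rank of an abelian subgroup).

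For contrast, the paper's proof is entirely different and bypasses both the descent and the base-case analysis. Its Main Lemma shows that for any non-abelian right-angled Artin group $A$, the group $\Comm(A)$ contains $(\Z/2\Z)^k$ for arbitrarily large $k$: invert the generators to get $(\Z/2\Z)^{\#\mathrm{vertices}}$, and pass to finite-index subgroups of $A$ that are again right-angled Artin groups on arbitrarily many vertices. On the other side, $\Comm(\Mod(S_{g,n}))$ is an extended mapping class group by Ivanov and Korkmaz \cite{ivanov,korkmaz} (with the two exceptional identifications for $S_{1,2}$ and $S_{2,0}$), and by Kerckhoff's solution of the Nielsen realization problem \cite{kerckhoff} such groups have finite subgroups of bounded order. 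Since $\Mod(S_{g,n})$ is not virtually abelian, the theorem follows uniformly, with the surfaces you treat as difficult base cases handled by the same one-line comparison of commensurators. If you want to salvage your approach, the honest statement is that everything reduces to proving rigidity for $S_{0,5}$ and $S_{1,2}$, and that step is currently missing.
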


To prove Theorem~\ref{thm:main}, we consider a third invariant of the abstract commensurability class of a group $G$, namely, the abstract commensurator $\Comm(G)$.  This is the group of all isomorphisms between finite-index subgroups of $G$, up to restriction; see \cite[Section 5]{ivanov}.  In what follows, we say that a group is virtually abelian if it has an abelian subgroup of finite index.  

\begin{mainlemma}
If $G$ is a group that is not virtually abelian and where $\Comm(G)$ does not contain $(\Z/2\Z)^k$ for arbitrarily large $k$, then $G$ is not abstractly commensurable with a right-angled Artin group.
\end{mainlemma}

\begin{proof}

It is enough to show that if $A$ is a non-abelian right-angled Artin group, then $\Comm(A)$ contains $(\Z/2\Z)^k$ for arbitrarily large $k$.  The proof of this fact consists of two observations.  

The first observation is that if the defining graph $\Gamma$ for a non-abelian right-angled Artin group $A$ has $k$ vertices, then $\Comm(A)$ contains a subgroup isomorphic to $(\Z/2\Z)^k$.  The generators are the abstract commensurators obtained by inverting the elements of $A$ corresponding to the vertices of $\Gamma$ (each is a nontrivial element of $\Comm(A)$ since every finite-index subgroup of $A$ contains some power of this generator and no power of a generator is equal to its inverse).

The second observation is that (as $A$ is not abelian) $A$ contains a finite-index subgroup $A'$ that is a right-angled Artin group whose defining graph has  arbitrarily many vertices.  Since $\Comm(A) \cong \Comm(A')$, the result follows.

To prove the second observation, we choose a vertex $v$ of $\Gamma$ for which the star of $v$ is not all of $\Gamma$.  Such a vertex exists since $A$ is not abelian.  Consider the homomorphism $A \to \Z/m\Z$ obtained by sending the generator of $A$ corresponding to $v$ to 1 and the generators corresponding to all other vertices to 0.  The kernel has finite index in $A$ and is a right-angled Artin group whose defining graph is obtained by taking $m$ copies of $\Gamma$ and gluing along the $m$ copies of the star of $v$; see \cite[Section 11]{bks} and \cite[Corollary 5]{kk}.  Since the star of $v$ is not all of $\Gamma$, the new graph has more than $m$ vertices.  
\end{proof}

\medskip

We can deduce Theorem~\ref{thm:main} directly from the Main Lemma, as follows.  First,  $\Mod(S_{g,n})$ is not virtually abelian, for instance because nonzero powers of Dehn twists about two curves with nonzero geometric intersection fail to commute (and we can choose these powers so that they lie in any given subgroup of finite index) \cite[Section 3.3]{primer}. Also, for most of the surfaces covered by Theorem~\ref{thm:main}, Ivanov \cite[Theorem 5]{ivanov} and Korkmaz \cite[Theorem 3]{korkmaz} showed that $\Comm(\Mod(S_{g,n}))$ is isomorphic to the extended mapping class group $\Mod^{\pm}(S_{g,n})$, the group of homotopy classes of all (not-necessarily-orientation-preserving) homeomorphisms of $S_{g,n}$.  There are two exceptions: $\Comm(\Mod(S_{1,2})) \cong \Mod^\pm(S_{0,5})$ and $\Comm(\Mod(S_{2,0})) \cong \Mod^\pm(S_{0,6})$; see \cite[Proposition 7]{bm} and \cite[Theorem 1.2]{irmak}.  It follows from Kerckhoff's solution to the Nielsen realization problem \cite{kerckhoff} that $\Mod^\pm(S_{g,n})$ does not contain finite subgroups of arbitrary large cardinality; see, e.g., \cite[Section 7.2]{primer}.  Thus, $\Mod(S_{g,n})$ satisfies both hypotheses of the Main Lemma and Theorem~\ref{thm:main} follows immediately.  

\medskip

Our proof of the Main Lemma can be combined with a theorem of Bartholdi and Bogopolski \cite[Theorem 2.8]{bb} to prove that $\Comm(A)$ is not finitely generated, thus giving a different (but similar) proof that $\Comm(A)$ is not isomorphic to any  $\Mod^{\pm}(S)$.

\medskip

The assumptions in Theorem~\ref{thm:main} are in fact necessary, as $\Mod(S_{1,0}) \cong \Mod(S_{1,1}) \cong \SL_2(\Z)$ and $\Mod(S_{0,4}) \cong \PSL_2(\Z) \ltimes (\Z/2\Z \times \Z/2\Z)$ are commensurable with the free group $F_2$ and for $n \leq 3$ the group $\Mod(S_{0,n})$ is finite, hence abstractly commensurable with the trivial right-angled Artin group.

\medskip

Finally, the $(\Z/2\Z)^k$ subgroups of $\Comm(A)$ we construct further embed into the quasi-isometry group of $A$, and so using the theorem of Behrstock, Kleiner, Minsky, and Mosher that the quasi-isometry group of $\Mod(S_{g,n})$ is again an extended mapping class group \cite[Theorem 1.1]{bkmm}, we can conclude the following strengthening of Theorem~\ref{thm:main}.

\begin{theorem}
\label{thm:qi}
Let $g,n \geq 0$ and assume that $3g+n \geq 5$.  No right-angled Artin group is quasi-isometric to $\Mod(S_{g,n})$.
\end{theorem}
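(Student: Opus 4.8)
The plan is to repeat the proof of the Main Lemma with the quasi-isometry group in place of the abstract commensurator. For a finitely generated group $G$, let $\mathrm{QI}(G)$ denote the group of self-quasi-isometries of $G$ modulo those lying a bounded distance from the identity; this group depends only on the quasi-isometry type of $G$, so in particular $\mathrm{QI}(G) \cong \mathrm{QI}(G')$ for every finite-index subgroup $G' \le G$. It therefore suffices to establish: (i) if $A$ is a non-abelian right-angled Artin group, then $\mathrm{QI}(A)$ contains $(\Z/2\Z)^k$ for arbitrarily large $k$; and (ii) $\Mod(S_{g,n})$ is not virtually abelian and $\mathrm{QI}(\Mod(S_{g,n}))$ does not contain $(\Z/2\Z)^k$ for arbitrarily large $k$. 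Granting these: if $\Mod(S_{g,n})$ were quasi-isometric to a right-angled Artin group $A$, then $A$ could not be abelian, since a group quasi-isometric to $\Z^m$ is virtually $\Z^m$ whereas $\Mod(S_{g,n})$ is not virtually abelian; so by (i) the group $\mathrm{QI}(\Mod(S_{g,n})) \cong \mathrm{QI}(A)$ would contain $(\Z/2\Z)^k$ for all $k$, contradicting (ii).

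For (i) I would argue exactly as in the Main Lemma. First, if $\Gamma$ has vertices $v_1,\dots,v_k$, then for each $i$ the map inverting $v_i$ and fixing the remaining generators is an automorphism $\iota_i$ of $A$, because $v_j$ commutes with $v_i$ if and only if it commutes with $v_i^{-1}$. The $\iota_i$ pairwise commute, each is an involution, and each is bi-Lipschitz for the word metric associated to $\{v_1,\dots,v_k\}$, hence represents an element of $\mathrm{QI}(A)$. These generate a copy of $(\Z/2\Z)^k$ inside $\mathrm{QI}(A)$: for $\emptyset \ne T \subseteq \{1,\dots,k\}$ the product $\prod_{i\in T}\iota_i$ sends $v^m$ to $v^{-m}$ for any $v = v_i$ with $i \in T$, and since the word length of $v^m$ equals $|m|$ (it is at least the $\ell^1$-norm of its image in the abelianization), the displacement $d(v^m, v^{-m}) = 2|m|$ is unbounded, so $\prod_{i\in T}\iota_i$ is not a bounded distance from the identity. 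Second, as in the Main Lemma, since $A$ is non-abelian it has a finite-index subgroup $A'$ that is a right-angled Artin group on arbitrarily many vertices; applying the previous step to $A'$ and using $\mathrm{QI}(A) \cong \mathrm{QI}(A')$ gives (i).

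For (ii): that $\Mod(S_{g,n})$ is not virtually abelian was already observed in the deduction of Theorem~\ref{thm:main}. For the statement about $\mathrm{QI}$, I would invoke the rigidity theorem of Behrstock, Kleiner, Minsky, and Mosher \cite[Theorem 1.1]{bkmm}: for $3g+n \ge 5$, every self-quasi-isometry of $\Mod(S_{g,n})$ lies a bounded distance from the action of a unique element of an extended mapping class group $\Mod^{\pm}(S')$ (with $S' = S_{g,n}$ apart from the two exceptional surfaces already handled in the deduction of Theorem~\ref{thm:main}), so $\mathrm{QI}(\Mod(S_{g,n})) \cong \Mod^{\pm}(S')$. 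As noted there, Kerckhoff's solution of the Nielsen realization problem \cite{kerckhoff} implies that $\Mod^{\pm}(S')$ has no finite subgroups of arbitrarily large cardinality, and in particular no $(\Z/2\Z)^k$ for large $k$. This proves (ii), and the theorem follows.

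The only input beyond the Main Lemma and the cited rigidity theorem is the verification that the copy of $(\Z/2\Z)^k$ built in the proof of the Main Lemma survives in $\mathrm{QI}(A)$ rather than collapsing --- that is, that no nontrivial product of the inversions $\iota_i$ lies a bounded distance from the identity. I expect this to be essentially the only new point, and the displacement computation on the cyclic subgroups $\langle v_i\rangle$ settles it; the remainder is a formal transcription of the Main Lemma together with the Behrstock--Kleiner--Minsky--Mosher theorem.
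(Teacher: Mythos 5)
Your proposal is correct and follows essentially the same route as the paper: the paper's (two-line) argument is exactly that the $(\Z/2\Z)^k$ subgroups built in the Main Lemma embed into the quasi-isometry group of $A$, while by Behrstock--Kleiner--Minsky--Mosher the quasi-isometry group of $\Mod(S_{g,n})$ is an extended mapping class group, whose finite subgroups have bounded order by Kerckhoff. Your added verifications (the displacement estimate showing the inversions survive in $\mathrm{QI}(A)$, and quasi-isometric rigidity of $\Z^m$ to rule out abelian $A$) are precisely the details the paper leaves implicit.
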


\p{Mapping class groups of surfaces with boundary} Let $S_{g,n}^b$ denote the surface obtained from $S_{g,n}$ by removing the interiors of $b$ disks, disjoint from each other and the marked points; we denote $S_{g,0}^b$ by $S_g^b$ and $S_{g,0}^0$ by $S_g$.  The mapping class group $\Mod(S_{g,n}^b)$ is the group of homotopy classes of orientation-preserving homeomorphisms of $S_{g,n}^b$ that restrict to the identity on the boundary.

\begin{theorem}
\label{thm:boundary}
Let $g,n,b \geq 0$ and assume that $3g+n+b \geq 5$.  No right-angled Artin group is abstractly commensurable with $\Mod(S_{g,n}^b)$.
\end{theorem}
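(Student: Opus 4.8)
The plan is to verify the two hypotheses of the Main Lemma for $G=\Mod(S_{g,n}^b)$; since the case $b=0$ is Theorem~\ref{thm:main}, we may assume $b\geq 1$. That $G$ is not virtually abelian is seen exactly as for closed surfaces: pick two essential simple closed curves in the interior of $S_{g,n}^b$ with nonzero geometric intersection number (these exist because $3g+n+b\geq 5$), and note that suitable nonzero powers of the two Dehn twists lie in any prescribed finite-index subgroup and fail to commute \cite[Section 3.3]{primer}. So everything comes down to showing that $\Comm(G)$ contains $(\Z/2\Z)^k$ only for bounded $k$. The tool is the capping homomorphism
\[
1 \longrightarrow \Z^b \longrightarrow \Mod(S_{g,n}^b) \xrightarrow{\ \pi\ } \Mod(S_{g,n+b}) \longrightarrow 1 ,
\]
in which each boundary component is filled by a once-marked disk; the kernel is generated by the $b$ boundary Dehn twists \cite{primer}, and it is central because every mapping class of $S_{g,n}^b$ restricts to the identity on the boundary.

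The crux is that the boundary subgroup $\Z^b=\ker\pi$ is invariant under all abstract commensurations of $G$, and this reduces to a computation of centers. First observe that if $z\in\Mod(S_{g,n+b})$ commutes with some finite-index subgroup $L$, then for each simple closed curve $c$ a nonzero power $T_c^N$ lies in $L$, so $T_{z(c)}^N=zT_c^Nz^{-1}=T_c^N$ and hence $z(c)=c$: the element $z$ fixes every isotopy class of curve. Since $3g+n+b\geq 5$ and $b\geq 1$, the surface $S_{g,n+b}$ is not among the small exceptional surfaces admitting a nontrivial mapping class fixing all curves — the only exception in our range is $S_{g,n+b}=S_{1,2}$, occurring precisely for $(g,n,b)\in\{(1,1,1),(1,0,2)\}$, where this "virtual center" is $\langle\iota\rangle\cong\Z/2\Z$. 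Consequently, for every finite-index $H\leq G$ the center $Z(H)$ is a finitely generated abelian group of rank $b$ containing $\ker\pi\cap H\cong\Z^b$ with index at most $2$, and equal to $\ker\pi\cap H$ outside those two exceptional triples. Because the center is characteristic, every isomorphism between finite-index subgroups of $G$ carries a finite-index subgroup of $\ker\pi$ into $\ker\pi$, hence descends to an isomorphism between finite-index subgroups of $\Mod(S_{g,n+b})$ (in the two exceptional cases, between finite-index subgroups of $\Mod(S_{1,2})/\langle\iota\rangle$, which is commensurable to $\Mod(S_{1,2})$). This yields a homomorphism $\rho\colon \Comm(G)\to \Comm\bigl(\Mod(S_{g,n+b})\bigr)$, the rest of the argument being unchanged in the exceptional cases.

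Now bound $(\Z/2\Z)^k\leq\Comm(G)$ using $\rho$. By the theorems of Ivanov \cite[Theorem 5]{ivanov} and Korkmaz \cite[Theorem 3]{korkmaz} (together with \cite[Proposition 7]{bm} and \cite[Theorem 1.2]{irmak} in the exceptional case), the target of $\rho$ is an extended mapping class group, and by Kerckhoff's solution of the Nielsen realization problem \cite{kerckhoff} it has no finite subgroups of arbitrarily large cardinality; let $K_0$ bound the rank of its elementary abelian $2$-subgroups. For $\ker\rho$: given $(\Z/2\Z)^k\leq\ker\rho$, pass to a common finite-index domain $H\leq G$ on which these commensurations are honest automorphisms of $H$; each such automorphism $\phi$ satisfies $\phi(x)x^{-1}\in Z(H)$ for all $x$, so $x\mapsto\phi(x)x^{-1}$ is a homomorphism $H\to Z(H)$, and restriction to $Z(H)$ gives a homomorphism from $(\Z/2\Z)^k$ to $\mathrm{Aut}(Z(H))$. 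If $\phi$ lies in its kernel, then $\phi(x)x^{-1}$ takes values in the $2$-torsion of $Z(H)$ — a group of order at most $2$ — so $\phi$ is the identity on a finite-index subgroup of $H$ and hence trivial in $\Comm(G)$. Thus $(\Z/2\Z)^k$ embeds in $\mathrm{Aut}(Z(H))$, which is commensurable to $\GL_b(\Z)$ and so contains no $(\Z/2\Z)^k$ for $k$ exceeding a bound depending only on $b$. Combining the two estimates, $\Comm(G)$ contains $(\Z/2\Z)^k$ only for bounded $k$, and Theorem~\ref{thm:boundary} follows from the Main Lemma.

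The main obstacle is the middle step: proving that the boundary-twist subgroup is preserved by every commensuration, which in turn requires pinning down the centers of \emph{all} finite-index subgroups of $\Mod(S_{g,n+b})$ — the curve-fixing argument handles this, but one must separately track the $\Z/2\Z$ that appears for $S_{1,2}$ — and then the bookkeeping showing that $\ker\rho$, a priori a large group, has only bounded elementary abelian $2$-subgroups because homomorphisms to $\Z^b$ are torsion-free and homomorphisms to finite groups vanish on finite-index subgroups.
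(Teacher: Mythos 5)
Your proposal is correct in outline, but it takes a genuinely different route from the paper. The paper first removes the case $g\geq 2$, $b>0$ via Harer's computation (for $b>0$ the vcd and the maximal abelian rank agree only when $g\in\{0,1\}$), then proves in the appendix that $\Mod(S_{g,n}^b)[2]\cong\Mod(S_{g,n+b})[2]\times\Z^b$ for $g\in\{0,1\}$ (Theorem~\ref{thm:virtual splitting}), and finishes with the lemma that a group commensurable with $G'\times\Z^n$ is commensurable with a right-angled Artin group if and only if $G'$ is, thereby reducing everything to Theorem~\ref{thm:main}. You instead verify the hypotheses of the Main Lemma for $\Mod(S_{g,n}^b)$ directly and uniformly in $g$, analyzing $\Comm$ through the central extension by the boundary twists: commensurations preserve the virtual center (computed via the twist/centralizer argument, with $S_{1,2}$ as the only exceptional capped surface in the stated range), hence descend to a map $\rho$ into $\Comm(\Mod(S_{g,n+b}))$, whose target has bounded $2$-rank by Ivanov--Korkmaz plus Kerckhoff and whose kernel has $2$-rank bounded through its action on $\Z^b$. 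This is close in spirit to the alternative the paper only sketches (the computation $\Comm(\Mod(S_{g,n}^b))\cong\Comm(\Mod(S_{g,n+b}))\ltimes(\GL_b(\Q)\ltimes(\Q^b)^\infty)$ for $g\in\{0,1\}$ via \cite{commbn}), but you need neither the Harer reduction nor the appendix splitting; the paper's route buys a cleaner modular proof in which all delicate $\Comm$ bookkeeping is outsourced to Theorem~\ref{thm:main}, while yours buys uniformity and self-containedness. Two points need repair in your write-up, both fixable: the step ``pass to a common finite-index domain on which these commensurations are honest automorphisms of $H$'' is not justified (a finite subgroup of $\Comm(G)$ need not be realized by automorphisms of a single finite-index subgroup), but your argument does not require it, since each $\phi\in\ker\rho$ individually satisfies $\phi(x)x^{-1}\in Z$ on a finite-index domain and carries $Z$ into $Z$ up to finite index, so restriction yields a homomorphism $\ker\rho\to\Comm(\Z^b)\cong\GL_b(\Q)$ and your order-two computation ($\phi^2(x)=x\,d(x)^2$, with $Z$ torsion free because $b\geq 1$ forces $\Mod(S_{g,n}^b)$ to be torsion free) goes through elementwise; and the capping homomorphism is only virtually surjective onto $\Mod(S_{g,n+b})$ (its image is the finite-index subgroup fixing the $b$ new marked points), which is harmless for commensurability and commensurator arguments but should be stated correctly rather than as a short exact sequence onto $\Mod(S_{g,n+b})$.
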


As the braid group on $n$ strands is isomorphic to $\Mod(S_{0,n}^1)$, Theorem~\ref{thm:boundary} in particular implies that the braid group (or pure braid group) on $n \geq 4$ strands is not abstractly commensurable with any right-angled Artin group.

When $b > 0$, the virtual cohomological dimension of $\Mod(S_{g,n}^b)$ and the maximal rank of an abelian subgroup of $\Mod(S_{g,n}^b)$ are equal if and only if $g \in\{0,1\}$ \cite[Theorem 4.1]{harer}, and so to prove Theorem~\ref{thm:boundary} it suffices to consider these two values of $g$.  A fact special to these two cases is that $\Mod(S_{g,n}^b)$ is abstractly commensurable with the direct product $\Mod(S_{g,n+b}) \times \Z^b$; see Theorem~\ref{thm:virtual splitting} below.  Theorem~\ref{thm:boundary} thus follows from the next lemma and Theorem~\ref{thm:main}.

\begin{lem}
If a group $G$ is abstractly commensurable with $G' \times \Z^n$ for some group $G$ and some $n \geq 0$, then $G$ is abstractly commensurable with a right-angled Artin group if and only if $G'$ is abstractly commensurable with a right-angled Artin group.
\end{lem}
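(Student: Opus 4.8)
The plan is to prove the two directions separately; the reverse implication is immediate, and the forward one carries all the content. Write $A(\Lambda)$ for the right-angled Artin group on a finite graph $\Lambda$, and write $H\le_f G$ to mean $H$ has finite index in $G$. Two standard facts will be used: a direct product of right-angled Artin groups is again one (indeed $A(\Lambda_1)\times A(\Lambda_2)=A(\Lambda_1 * \Lambda_2)$, a graph join), and every right-angled Artin group decomposes as $A(\Gamma)\cong\Z^m\times A(\Delta)$, where the $\Z^m$ is generated by the vertices adjacent to every other vertex — equivalently $\Z^m=Z(A(\Gamma))$ — and $\Delta$ is the full subgraph on the remaining vertices, so $A(\Delta)$ has trivial center. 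For the reverse implication: if $G'$ is abstractly commensurable with $A(\Gamma)$, choose $H\le_f G'$ with $H\cong H'\le_f A(\Gamma)$; then $H\times\Z^n\le_f G'\times\Z^n$ is isomorphic to $H'\times\Z^n\le_f A(\Gamma)\times\Z^n=A(\Gamma * K_n)$, so $G'\times\Z^n$, hence $G$, is abstractly commensurable with a right-angled Artin group.

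For the forward implication, since $G$ is abstractly commensurable with both a right-angled Artin group and with $G'\times\Z^n$, so is $G'\times\Z^n$; writing $G'\times\Z^n=(G'\times\Z^{n-1})\times\Z$, induction on $n$ reduces us to proving: if $M\times\Z$ is abstractly commensurable with a right-angled Artin group $A=A(\Gamma)$, then so is $M$. The workhorse is the elementary observation that a finite-index subgroup $K$ of a direct product $L_1\times L_2$ with $L_2$ central contains $(K\cap L_1)\times(K\cap L_2)$ with finite index — each intersection is finite-index in its factor, and the product of these indices bounds $[L_1\times L_2:K]$. Starting from an isomorphism between $N\le_f M\times\Z$ and $P\le_f A$ and applying the observation three times — inside $M\times\Z$, inside $A=\Z^m\times A(\Delta)$, and once more after transporting back through the isomorphism — I obtain an honest finite-index subgroup $V\times C$ of $A$, with $C\cong\Z$, with $V$ isomorphic to a finite-index subgroup of $M$, and with $V\times C\le_f\Z^m\times B$ for some $B\le_f A(\Delta)$.

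Next I would exploit that $C$ is central. Using that in a right-angled Artin group $C(g^k)=C(g)$ for $k\neq0$ — so that any finite-index subgroup of a right-angled Artin group with trivial center again has trivial center — one checks that $C\subseteq\Z^m$ and $Z(V)\subseteq\Z^m$; hence $Z(V)\times C=(V\times C)\cap\Z^m$, which is finite-index, so of rank $m$, in $\Z^m$, giving $Z(V)\cong\Z^{m-1}$ (in particular $m\geq1$: the extra $\Z$ forces the ambient right-angled Artin group to have nontrivial center). The projection $\Z^m\times B\to B$ kills exactly $(V\times C)\cap\Z^m=Z(V)\times C$, so it identifies $V/Z(V)$ with a finite-index subgroup of $B$, hence of the centerless right-angled Artin group $A(\Delta)$. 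It remains to split the central extension $1\to Z(V)\to V\to V/Z(V)\to1$ virtually. Let $\alpha\colon V\to\Z^m$ be the restriction of the projection, which is the identity on $Z(V)\subseteq\Z^m$; pick a splitting $\Z^m=\widehat{Z(V)}\oplus L$ with $\widehat{Z(V)}\cong\Z^{m-1}$ the saturation of $Z(V)$ and $L\cong\Z$, and let $f\colon V\to\widehat{Z(V)}$ be $\alpha$ followed by projection to $\widehat{Z(V)}$. Then $f$ is injective with finite-index image on $Z(V)$, so $\ker f\cap Z(V)=1$ and $\ker f\times Z(V)$, being the $f$-preimage of the finite-index subgroup $f(Z(V))\le_f f(V)$, has finite index in $V$; and one checks $\ker f$ meets $\Z^m$ trivially, so $\Z^m\times B\to B$ embeds $\ker f$ isomorphically onto a finite-index subgroup of $B$. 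Thus $V$ has a finite-index subgroup isomorphic to $D\times\Z^{m-1}$ for some $D\le_f A(\Delta)$, which is abstractly commensurable with the right-angled Artin group $A(\Delta)\times\Z^{m-1}$; since $M$ is abstractly commensurable with $V$, the argument is complete.

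The step I expect to be the main obstacle is the last one: a central extension of a (virtual) right-angled Artin group by $\Z^{m-1}$ need not virtually split — the integral Heisenberg group is the standard warning — and what makes it work is precisely that $V$ is realized inside $\Z^m\times A(\Delta)$ with its entire center landing in the abelian factor, which supplies the partial retraction $f$ used to peel off the central $\Z^{m-1}$. Beyond this, the only genuinely right-angled-Artin-group-specific input is the behavior of centralizers of powers (equivalently, that finite-index subgroups of centerless right-angled Artin groups are centerless); the rest is bookkeeping with finite-index subgroups of direct products.
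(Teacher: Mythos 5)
Your proof is correct, and it is worth noting how it relates to the paper's. The published proof is essentially two citations: it uses Servatius's description of centers of right-angled Artin groups to write the ambient group as $A' \times \Z^n$ with $A'$ a right-angled Artin group, and then quotes a cancellation lemma from the authors' earlier paper on commensurators of braid groups to conclude that $A'$ is commensurable with $G'$. You use the same structural input about right-angled Artin groups (the decomposition $A(\Gamma)\cong \Z^m\times A(\Delta)$ with $A(\Delta)$ centerless, plus $C(g^k)=C(g)$, which gives $Z(H)\le Z(A)$ for every finite-index subgroup $H$ and hence that the extra $\Z$ forces $m\ge 1$), but instead of invoking the external cancellation lemma you prove the cancellation yourself: inducting on $n$, peeling off one central $\Z$ at a time, passing to the product subgroup $(N\cap M)\times(N\cap\Z)$, transporting it into $A$, and then virtually splitting the central extension of $V$ by retracting onto the saturation of $Z(V)$ inside the $\Z^m$ factor. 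This is the step that carries the real content, and your version correctly isolates why it works (the entire center of the finite-index subgroup lands in the abelian direct factor of $A$, supplying the needed retraction, which is exactly what rules out Heisenberg-type obstructions). The trade-off: the paper's argument is shorter but outsources the key point, while yours is self-contained and makes explicit that the conclusion is $M$ commensurable with $A(\Delta)\times\Z^{m-1}$ rather than with any a priori prescribed $A'$. Two cosmetic remarks: the centrality of $L_2$ in your finite-index observation is not needed (it holds for any direct product), and the fact you actually use later is the containment $Z(H)\le Z(A)$ for $H\le_f A$, of which "finite-index subgroups of centerless right-angled Artin groups are centerless" is the special case; both follow from the centralizer fact exactly as you indicate.
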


\begin{proof}

If a right-angled Artin group $A$ is abstractly commensurable with $G$, it is then abstractly commensurable with $G' \times \Z^n$.  From this it follows that $A$ is isomorphic to $A' \times \Z^n$ for some right-angled Artin group $A'$ (use \cite[Section III]{servatius}) and $A'$ is abstractly commensurable with $G'$ (cf. \cite[Lemma 2.2]{commbn}).  The other direction is trivial since a direct product of right-angled Artin groups is again a right-angled Artin group.  
\end{proof}


The genus zero and one cases of Theorem~\ref{thm:boundary} can also be proven by directly applying the Main Lemma, as in the proof of Theorem~\ref{thm:main}.  The group $\Comm(\Mod(S_{0,n}^1))$ was computed by the last two authors \cite{commbn} and a similar argument using Theorem~\ref{thm:virtual splitting} below and Section 3 of \cite{commbn} shows that 
\[ \Comm(\Mod(S_{g,n}^b)) \cong \Comm(\Mod(S_{g,n+b})) \ltimes(\GL_b(\Q) \ltimes (\Q^b)^\infty) \]
for $g\in\{0,1\}$ and $3g+n+b \geq 5$.   Again, these groups do not contain finite subgroups of arbitrarily large cardinality.


\p{Subgroups of the mapping class group} We will apply our Main Lemma to show that several other classes of groups are not abstractly commensurable with right-angled Artin groups: first for certain subgroups of the mapping class group, and then for the outer automorphism group of a free group and certain linear groups.

Let $\T_d(S_g)$ denote the subgroup of $\Mod(S_g)$ generated by the $d$th powers of all Dehn twists.  This group has infinite index in $\Mod(S_g)$ for $d \geq 11$ and $g \geq 2$ \cite{funar}.  It has been conjectured that $\T_d(S_g)$ is a right-angled Artin group for $d$ large \cite{funar}.  However, Ivanov's proof that $\Comm(\Mod(S_g)) \cong \Mod^\pm(S_g)$ (see \cite[Theorem 5]{ivanov}) carries over to show that $\Comm(\T_d(S_g)) \cong \Mod^\pm(S_g)$ for $g \geq 3$; see \cite[Corollary 2]{aramayonasouto} for an alternate argument.  As $\T_d(S_g)$ is not virtually abelian, we conclude the following.

\begin{theorem}
Let $g \geq 3$ and $d \geq 1$.  The group $\T_d(S_g)$ is not abstractly commensurable with any right-angled Artin group.
\end{theorem}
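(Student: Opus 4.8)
The plan is to apply the Main Lemma with $G = \T_d(S_g)$, so the task reduces to checking its two hypotheses: that $\T_d(S_g)$ is not virtually abelian, and that $\Comm(\T_d(S_g))$ does not contain $(\Z/2\Z)^k$ for $k$ arbitrarily large.

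For the first hypothesis I would argue exactly as in the discussion of Theorem~\ref{thm:main}: fix simple closed curves $a,b$ on $S_g$ with $i(a,b) \neq 0$, so that $T_a^d$ and $T_b^d$ belong to $\T_d(S_g)$ but do not commute; given any finite-index subgroup $H \leq \T_d(S_g)$, some common power $T_a^{dN}, T_b^{dN}$ lies in $H$ and these still fail to commute \cite[Section 3.3]{primer}, so $H$ is nonabelian. For the second hypothesis I would invoke the computation $\Comm(\T_d(S_g)) \cong \Mod^{\pm}(S_g)$ for $g \geq 3$ (Ivanov's argument for $\Mod(S_g)$ carries over; see \cite[Theorem 5]{ivanov} and \cite[Corollary 2]{aramayonasouto}), together with the fact, already used in the proof of Theorem~\ref{thm:main}, that Kerckhoff's solution of the Nielsen realization problem \cite{kerckhoff} forces the finite subgroups of $\Mod^{\pm}(S_g)$ to have bounded order; in particular no $(\Z/2\Z)^k$ with $k$ large embeds. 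With both hypotheses verified, the Main Lemma yields the theorem.

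Essentially everything here is assembly of results already in place, so the only real content lies in the identification $\Comm(\T_d(S_g)) \cong \Mod^{\pm}(S_g)$, and that is where I expect the difficulty — and where the hypothesis $g \geq 3$ is genuinely used. Running Ivanov's rigidity scheme requires recovering the curve complex of $S_g$ from the purely group-theoretic structure of $\T_d(S_g)$ (through its Dehn-twist subgroups) and then appealing to the automorphism theorem for the curve complex; the point that makes this work for $\T_d(S_g)$ is precisely that, since $g \geq 3$, this subgroup contains the $d$th power of every Dehn twist, which is exactly what \cite{aramayonasouto} exploits. I would cite that paper (and Ivanov) for this step rather than reproduce the argument, since it lies outside the scope of the present note.
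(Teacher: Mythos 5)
Your proposal is correct and follows essentially the same route as the paper: verify that $\T_d(S_g)$ is not virtually abelian via noncommuting powers of Dehn twists about intersecting curves, cite the identification $\Comm(\T_d(S_g)) \cong \Mod^\pm(S_g)$ for $g \geq 3$ (Ivanov's argument carrying over, with \cite{aramayonasouto} as an alternative), bound finite subgroups of $\Mod^\pm(S_g)$ via Kerckhoff, and apply the Main Lemma. No gaps.
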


Let $\pi$ denote $\pi_1(S_g)$, and let $\pi^k$ denote the $k$th term of its lower central series: $\pi^1 = \pi$ and $\pi^{k+1} = [\pi,\pi^k]$.  The Johnson filtration of $\Mod(S_g)$ is the nested  sequence of groups $(\N_k(S_g))$ where $\N_k(S_g)$ is the kernel of the natural homomorphism $\Mod(S_g) \to \Out(\pi/\pi^{k+1})$.  The intersection of the $\N_k(S_g)$ is trivial.   The groups $\N_1(S_g)$ and $\N_2(S_g)$ are also known as the Torelli group and Johnson kernel of $S_g$.   For $g$ large enough, the abstract commensurators of these groups are all known to be isomorphic to $\Mod^\pm(S_g)$ \cite[Theorem 7]{farbivanov} \cite[Main Theorem 1]{commkg} \cite{bps}.  

\begin{theorem}
Let $k \geq 1$.  Let $g \geq 3$ if $k \leq 3$ and let $g \geq 4$ if $k > 3$.  No right-angled Artin group is abstractly commensurable with $\N_k(S_g)$.
\end{theorem}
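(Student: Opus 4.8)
The plan is to apply the Main Lemma, exactly as in the proof of Theorem~\ref{thm:main} and the theorem on $\T_d(S_g)$. So it suffices to verify the two hypotheses of the Main Lemma for $G = \N_k(S_g)$ under the stated genus bounds: first that $\N_k(S_g)$ is not virtually abelian, and second that $\Comm(\N_k(S_g))$ does not contain $(\Z/2\Z)^j$ for arbitrarily large $j$.

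For the first hypothesis, I would observe that $\N_k(S_g)$ contains nonabelian free subgroups. A clean way to see this: the Johnson filtration is nested and its intersection is trivial, so each $\N_k(S_g)$ is nontrivial and in fact of infinite index in $\Mod(S_g)$; more concretely, one can exhibit explicit elements---for instance, for $\N_1(S_g)$ (the Torelli group) one uses bounding pair maps or separating twists, and for higher $k$ one uses commutators of such elements that land deep in the filtration---whose nonzero powers generate a free group of rank two by a ping-pong argument (this is standard; see the discussion of subgroups of the Torelli group and its terms, e.g.\ in \cite{primer} and \cite{farbivanov}). Since $\N_k(S_g)$ for $g \geq 3$ (and all $k\geq 1$) contains a nonabelian free group, it is certainly not virtually abelian.

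The second hypothesis is the one that uses the genus restrictions, and it is essentially quoted from the literature. For $g$ large enough, $\Comm(\N_k(S_g)) \cong \Mod^\pm(S_g)$: this is Farb--Ivanov \cite[Theorem 7]{farbivanov} for the Torelli group $\N_1$, the result of \cite{commkg} for the Johnson kernel $\N_2$, and the work of Brendle--Margalit--Putman \cite{bps} for general $k$. The precise ranges are exactly the hypotheses stated: $g \geq 3$ suffices when $k \leq 3$ and $g \geq 4$ is needed when $k > 3$. Given this identification, $\Comm(\N_k(S_g)) \cong \Mod^\pm(S_g)$, and by Kerckhoff's resolution of the Nielsen realization problem \cite{kerckhoff} the extended mapping class group $\Mod^\pm(S_g)$ has a bound on the order of its finite subgroups (see \cite[Section 7.2]{primer}), so in particular it does not contain $(\Z/2\Z)^j$ for arbitrarily large $j$. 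Both hypotheses of the Main Lemma now hold, and the theorem follows immediately.

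The main obstacle---such as it is---is purely one of citation bookkeeping rather than mathematics: one must confirm that the computations of $\Comm(\N_k(S_g))$ in \cite{farbivanov}, \cite{commkg}, and \cite{bps} are valid in precisely the claimed genus ranges (and in particular that nothing goes wrong at the boundary cases $g=3$ with $k\leq 3$). Once those ranges are taken from the cited sources, the argument is a direct instance of the Main Lemma and requires no new input. It is worth remarking, as in the case of $\T_d(S_g)$, that this gives no information about whether $\N_k(S_g)$ might itself literally \emph{be} a right-angled Artin group for suitable parameters---the obstruction is entirely at the level of the abstract commensurator---but it does rule out abstract commensurability.
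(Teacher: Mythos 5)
Your proposal is correct and takes essentially the same route as the paper, which likewise deduces the result directly from the Main Lemma by quoting $\Comm(\N_k(S_g)) \cong \Mod^\pm(S_g)$ from \cite{farbivanov}, \cite{commkg}, and \cite{bps} in exactly these genus ranges, and by bounding the finite subgroups of $\Mod^\pm(S_g)$ via Kerckhoff's solution to the Nielsen realization problem. The only slip is bibliographic: \cite{bps} is the work of Bridson--Pettet--Souto, not Brendle--Margalit--Putman.
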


\p{Outer automorphism groups of right-angled Artin groups} For $g \geq 1$, the group $\Mod^\pm(S_g)$ is isomorphic to $\Out(\pi_1(S_g))$; this is the Dehn--Nielsen--Baer theorem  \cite[Theorem 8.1]{primer}.  As $\pi_1(S_1) \cong \Z^2$ and as $\pi_1(S_g^b)$ is a free group for $b > 0$, we can obtain analogs of the mapping class group by considering groups of the form $\Out(A)$ where $A$ is a right-angled Artin group.  By results of Davis--Januszkiewicz \cite{dj} and Taylor \cite{taylor}, there are many embeddings of right-angled Artin groups into $\Out(A)$ where $A$ is $\Z^n$ or $F_n$, just like in the mapping class group case. 

By the combined work of Borel \cite{borel} and Margulis \cite{margulis}, the abstract commensurator of $\GL_n(\Z)\cong\Out(\Z^n)$ is $\PGL_n(\Q) \rtimes \Z/2\Z$ for $n \geq 3$; see e.g.~\cite[Section 7.3]{studenmund} for an exposition of these ideas.  Since $\PGL_n(\Q)$ does not contain arbitrarily large subgroups of the form $(\Z/2\Z)^k$ and since $\GL_n(\Z)$ is not virtually abelian, we obtain the following further consequence of our Main Lemma. 

\begin{theorem}
\label{thm:gl}
Let $n \geq 3$.  No right-angled Artin group is abstractly commensurable with $\Out(\Z^n) \cong \GL_n(\Z)$.
\end{theorem}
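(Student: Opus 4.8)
The plan is to obtain Theorem~\ref{thm:gl} from the Main Lemma, in exactly the way Theorem~\ref{thm:main} was obtained. So I need to verify the two hypotheses of the Main Lemma for $G = \GL_n(\Z)$ with $n \geq 3$: that it is not virtually abelian, and that $\Comm(\GL_n(\Z))$ contains no copy of $(\Z/2\Z)^k$ with $k$ arbitrarily large.

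The first point is routine: for every $n \geq 2$ the group $\GL_n(\Z)$ contains a nonabelian free subgroup --- for instance a ping-pong pair inside the copy of $\SL_2(\Z)$ occupying the upper-left $2 \times 2$ block --- and no virtually abelian group contains a copy of $F_2$; hence $\GL_n(\Z)$ is not virtually abelian.

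For the second point I would use the Borel--Margulis computation recalled above: $\Comm(\GL_n(\Z)) \cong \PGL_n(\Q) \rtimes \Z/2\Z$ for $n \geq 3$. A subgroup isomorphic to $(\Z/2\Z)^k$ in $\PGL_n(\Q) \rtimes \Z/2\Z$ meets the index-two subgroup $\PGL_n(\Q)$ in a copy of $(\Z/2\Z)^{k}$ or $(\Z/2\Z)^{k-1}$, so it suffices to bound, in terms of $n$ alone, the rank of an elementary abelian $2$-subgroup of $\PGL_n(\Q)$. To do this I would pass from $\Q$ to $\C$, lift the $k$ standard generators to matrices $g_1, \dots, g_k \in \GL_n(\C)$, and rescale each (using that $\C$ has square roots) so that $g_i^2 = I$. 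Since the images of the $g_i$ commute in $\PGL_n(\C)$, each commutator $[g_i, g_j]$ is a central scalar squaring to the identity, i.e.\ equals $\pm I$; consequently $\langle g_1, \dots, g_k \rangle =: G$ is a \emph{finite} $2$-group of order at most $2^{k+1}$ which surjects onto $(\Z/2\Z)^k$ via $\GL_n(\C) \to \PGL_n(\C)$. By Jordan's theorem $G$ has an abelian normal subgroup $A$ of index at most some $J(n)$ depending only on $n$; being a finite abelian subgroup of $\GL_n(\C)$, $A$ is simultaneously diagonalizable, so its image in $\PGL_n(\C)$ embeds in $(\C^\times)^n/\C^\times \cong (\C^\times)^{n-1}$ and thus has elementary abelian $2$-rank at most $n-1$. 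Since that image has index at most $J(n)$ in the copy of $(\Z/2\Z)^k$, we get $k \leq n - 1 + \log_2 J(n)$. This bounds the rank uniformly, so $\Comm(\GL_n(\Z))$ contains no $(\Z/2\Z)^k$ for $k$ large, and the Main Lemma yields Theorem~\ref{thm:gl}.

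The one step needing genuine care is producing the finite lift $G \subseteq \GL_n(\C)$ and bounding its order: once one knows the $g_i$ have order $2$ and pairwise commute up to the central sign $\pm I$, the commutator subgroup lies in $\{\pm I\}$, so $G$ is a central extension with abelian quotient of $2$-power order and $|G| \leq 2^{k+1}$. After that, Jordan's theorem plus diagonalization of finite abelian linear groups closes the argument; alternatively one could replace Jordan by a direct analysis of the twisted group algebra $\C^{\mu}[(\Z/2\Z)^k]$ of the $2$-cocycle recording the signs $[g_i, g_j] = \pm I$, whose simple summands are matrix algebras of size $2^{r/2}$ with $r$ the rank of the associated alternating form, but the Jordan route is quicker to write. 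The remaining steps --- passing through the index-two subgroup and quoting the Main Lemma --- are immediate.
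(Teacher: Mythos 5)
Your proposal is correct and follows the paper's route exactly: the Borel--Margulis identification $\Comm(\GL_n(\Z)) \cong \PGL_n(\Q) \rtimes \Z/2\Z$, a uniform bound on elementary abelian $2$-subgroups of $\PGL_n(\Q)$, non-virtual-abelianness, and then the Main Lemma. The only difference is that the paper simply asserts the $(\Z/2\Z)^k$ bound while you prove it (correctly) by lifting to involutions in $\GL_n(\C)$ and invoking Jordan's theorem; a slightly quicker variant in the spirit of the paper's later remark on lattices is to embed $\PGL_n(\Q)$ into $\GL_{n^2}(\C)$ via the conjugation action and simultaneously diagonalize commuting involutions there, giving the bound $k \leq n^2$ without Jordan's theorem.
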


Wortman \cite{wortman} has pointed out that our Main Lemma also applies with $\GL_n(\Z)$ replaced by any lattice in a semisimple Lie group not locally isomorphic to $\SL_2(\R)$.  Indeed, for any such lattice $G$, Mostow--Prasad--Margulis superrigidity implies that $\Comm(G)$ is a subgroup of some $\GL_n(\C)$.  From the theory of Jordan canonical forms (in particular the fact that commuting matrices can be simultaneously put into normal form), we know that $\GL_n(\C)$ contains subgroups isomorphic to $(\Z/2\Z)^k$ only for $k \leq n$.  In particular, we can deduce the theorem of Koberda that for $k \geq 3$ no right-angled Artin group is abstractly commensurable with a lattice in $\SO(k,1)$ \cite[Theorem 1.14]{koberda}.  Additionally, Studenmund \cite{studenmund} has shown that the abstract commensurators of many lattices with nontrivial solvable radicals (e.g.~$\SL_n(\Z) \ltimes \Z^n$) are linear, so our argument applies to these lattices as well. 

\medskip

Farb and Handel proved that $\Comm(\Out(F_n))$ is isomorphic to $\Out(F_n)$ when $n \geq 4$ (it is not known whether $\Comm(\Out(F_3)) \cong \Out(F_3)$ or not) \cite{farbhandel}.  A finite subgroup of $\Out(F_n)$ can be identified with the symmetries of some fixed metric graph of rank $n$ \cite[Theorem 2.1]{culler} \cite{zimmermann}, and so there is a bound on the cardinality of such subgroups that depends only on $n$.  Since $\Out(F_n)$ is not virtually abelian, our Main Lemma also implies the following.

\begin{theorem}
\label{thm:out}
Let $n \geq 4$.   No right-angled Artin group is abstractly commensurable with $\Out(F_n)$.
\end{theorem}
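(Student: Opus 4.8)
The plan is to apply the Main Lemma to $G = \Out(F_n)$, so it suffices to verify its two hypotheses when $n \geq 4$: that $\Out(F_n)$ is not virtually abelian, and that $\Comm(\Out(F_n))$ does not contain $(\Z/2\Z)^k$ for arbitrarily large $k$.

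For the first hypothesis I would use the natural surjection $\Out(F_n) \to \GL_n(\Z)$ induced by the action on the abelianization of $F_n$. For $n \geq 2$ the group $\GL_n(\Z)$ contains a nonabelian free subgroup (for instance inside the block-diagonal $\GL_2(\Z)$, which is virtually free of rank two), hence is not virtually abelian; and a group that surjects onto a group which is not virtually abelian is itself not virtually abelian, so the same holds for $\Out(F_n)$. (Alternatively, one can exhibit two Nielsen automorphisms generating a free subgroup of rank two directly in $\Out(F_n)$.)

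The content of the second hypothesis rests on two imported facts. First, by Farb and Handel \cite{farbhandel}, $\Comm(\Out(F_n)) \cong \Out(F_n)$ for $n \geq 4$, so it is enough to bound the order of a finite subgroup of $\Out(F_n)$ by a function of $n$. Second, by the realization theorems of Culler \cite[Theorem 2.1]{culler} and Zimmermann \cite{zimmermann}, every finite subgroup $H \leq \Out(F_n)$ acts faithfully by isometries on some finite metric graph of rank $n$; after subdividing to eliminate valence-two vertices, the underlying combinatorial graph has at most a bounded (in terms of $n$) number of edges and vertices, so $H$ embeds in its finite automorphism group, whose order is bounded by some $N(n)$. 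In particular an embedding of $(\Z/2\Z)^k$ into $\Out(F_n)$ forces $2^k \leq N(n)$, which rules out arbitrarily large $k$. Both hypotheses of the Main Lemma now hold, and the theorem follows.

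The only genuine obstacle is the appeal to Farb and Handel's identification of $\Comm(\Out(F_n))$, which is the deep ingredient and is precisely why the hypothesis $n \geq 4$ is needed — it remains open whether $\Comm(\Out(F_3)) \cong \Out(F_3)$. The remaining pieces, namely the graph-symmetry bound on finite subgroups and the failure of virtual abelianness, already hold for all $n \geq 2$, and checking that they combine with the Main Lemma is entirely routine.
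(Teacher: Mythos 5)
Your proposal is correct and follows essentially the same route as the paper: verify the Main Lemma's hypotheses for $\Out(F_n)$ using Farb--Handel's computation $\Comm(\Out(F_n)) \cong \Out(F_n)$ for $n \geq 4$ together with the Culler--Zimmermann realization of finite subgroups as symmetries of a rank-$n$ metric graph, which bounds their order in terms of $n$. The only nitpick is a wording slip: to bound the graph's combinatorics one should suppress (not create by ``subdividing'') valence-two vertices; with that read correctly, the argument matches the paper's.
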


Theorems~\ref{thm:gl} and~\ref{thm:out} tell us that (in most cases) the outer automorphism groups of the right-angled Artin groups $\Z^n$ and $F_n$ are not commensurable with right-angled Artin groups.  We also know that $\Out(\Z^2) \cong \Out(F_2) \cong \GL_2(\Z)$ is commensurable with $F_2$, and that the outer automorphism group of the right-angled Artin group associated to the linear graph with four vertices is $((\Z/2\Z) \ltimes (\Z/2\Z)^4) \ltimes \Z^4$ (cf. \cite[Proposition 2.15]{day}), which is abstractly commensurable with $\Z^4$.  We are therefore led to the following question.

\begin{question}
For which right-angled Artin groups $A$ is $\Out(A)$ abstractly commensurable with a right-angled Artin group?
\end{question}


\appendix

\setcounter{secnumdepth}{0}

\section{Appendix. Virtual splitting for low-genus mapping class groups}

In this appendix we prove a fact used in our proof of Theorem~\ref{thm:boundary}.  For the statement, we define the level two subgroup $\Mod(S_{g,n}^b)[2]$ of $\Mod(S_{g,n}^b)$ as the finite-index subgroup consisting of all elements that act trivially on the mod two homology of the surface obtained from $S_{g,n}^b$ by removing the marked points.  Note that when $g=0$ the group $\Mod(S_{0,n}^b)[2]$ is the same as the pure mapping class group; in particular $\Mod(S_{0,n}^1)[2]$ is isomorphic to the pure braid group on $n$ strands.

\begin{theorem}
\label{thm:virtual splitting}
Let $g \in \{0,1\}$, and assume $3g+n+b \geq 3$.  We have
\[ \Mod(S_{g,n}^b)[2] \cong \Mod(S_{g,n+b})[2] \times \Z^b. \]
\end{theorem}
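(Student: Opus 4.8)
The plan is to realize $\Mod(S_{g,n}^b)[2]$ as a central extension of $\Mod(S_{g,n+b})[2]$ by $\Z^b$ and then to split that extension. Since the kernel will be central, any splitting automatically yields the asserted \emph{direct} product decomposition, so all of the difficulty is concentrated in the splitting.

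First I would set up the extension. Capping each of the $b$ boundary components of $S_{g,n}^b$ with a once-marked disk, and iterating the capping exact sequence \cite{primer}, gives
\[ 1 \to \Z^b \to \Mod(S_{g,n}^b) \to \Mod(S_{g,n+b}) \to 1, \]
where $\Z^b$ is generated by the Dehn twists about curves parallel to the boundary components; the inequality $3g+n+b\ge 3$ is exactly what guarantees that these twists have infinite order and together generate a subgroup isomorphic to $\Z^b$. Next I would restrict to the level two subgroups. A boundary-parallel curve is null-homologous, so each of these twists acts trivially on the mod two first homology of $S_{g,n}^b$ with its marked points deleted; hence $\Z^b\le\Mod(S_{g,n}^b)[2]$. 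On the other hand, capping one boundary component with a once-marked disk and deleting the new marked point gives an inclusion of the punctured surface underlying $S_{g,n}^b$ into that underlying $S_{g,n+b}$ which is a homotopy equivalence and is equivariant for the capping homomorphism; it therefore induces a $\Mod(S_{g,n}^b)$-equivariant isomorphism on mod two first homology, so that an element of $\Mod(S_{g,n}^b)$ acts trivially on mod two homology precisely when its image in $\Mod(S_{g,n+b})$ does. Combined with surjectivity of capping, this produces the central extension
\[ 1 \to \Z^b \to \Mod(S_{g,n}^b)[2] \to \Mod(S_{g,n+b})[2] \to 1. \]

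The heart of the argument is to show this extension splits, equivalently that its classifying class in $H^2(\Mod(S_{g,n+b})[2];\Z^b)\cong\bigoplus_{i=1}^{b}H^2(\Mod(S_{g,n+b})[2];\Z)$ vanishes. In the cases in play the level two subgroup is torsion free, so its classifying space may be taken to be the moduli space of genus $g$ surfaces with $n+b$ marked points and a level two structure; the $i$-th coordinate of the obstruction is then the first Chern class of the cotangent line at the $i$-th marked point, equivalently the Euler class of the circle bundle of tangent directions there. The crucial point---and the reason for the hypothesis $g\in\{0,1\}$---is that for such $g$ this line bundle is trivial. For $g=0$ one trivializes it from a global coordinate on $\mathbb{P}^1$ after using the $\PGL_2$-action to pin down three of the marked points; this is precisely the statement, carried out in \cite[Section 3]{commbn}, that the center of $\Mod(S_{0,n}^b)[2]$ (a pure braid group when $b=1$) splits off as a direct factor. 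For $g=1$ one trivializes all of these cotangent lines at once using the invariant holomorphic differential $dz$ on an elliptic curve $\C/\Lambda$. In either case the obstruction dies, the extension splits, and centrality of the kernel upgrades the splitting to $\Mod(S_{g,n}^b)[2]\cong\Mod(S_{g,n+b})[2]\times\Z^b$.

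I expect this splitting step to be the genuine obstacle. Two points need care: that the level two mapping class groups in play are indeed torsion free, so that the moduli interpretation of the obstruction is legitimate---this is where the complexity hypothesis is really used, the danger being a hyperelliptic involution slipping into the level two subgroup---and that the trivializations above are equivariant for the mapping class group action (for $g=1$, that the sign ambiguity of $dz$ under an automorphism of the marked curve is absorbed at level two). If the cohomological bookkeeping proves awkward, an alternative is to construct the section of capping by hand: choose lifts of a generating set of $\Mod(S_{g,n+b})[2]$ and verify the defining relations, where the only relation demanding real attention is the chain relation expressing a boundary twist through interior twists. This is essentially the route taken for $g=0$, $b=1$ in \cite{commbn}.
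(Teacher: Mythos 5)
Your genus~$0$ argument is fine (there level two is the pure mapping class group, which is torsion free, and the cross-ratio trivialization of the cotangent line does kill the obstruction, recovering the classical splitting of the pure braid group over its center), but the genus~$1$ half has a genuine gap, and it is exactly the danger you flagged and hoped would go away: the elliptic involution acts on $H_1$ by $-I$, which is trivial mod $2$, so it \emph{does} lie in the level two subgroup. Concretely, $\Mod(S_{1,1})[2]$ is the full congruence subgroup $\Gamma(2)\subset \SL_2(\Z)$, which contains $-I$; more generally $\Mod(S_{1,m})[2]$ has $2$-torsion for small $m$ (place the marked points at fixed points of the involution). This breaks your argument at two points: the level two group is not torsion free, so you cannot identify its $H^2$ with the cohomology of a moduli space of curves with level structure (only of the corresponding orbifold/stack), and on that stack the trivialization by the invariant differential is not equivariant---the involution sends $dz\mapsto -dz$ and acts by $-1$ on the cotangent line at each of its fixed marked points---so the $\psi$-class survives as a nontrivial $2$-torsion class rather than vanishing. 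Indeed the extension $1\to\Z\to\Mod(S_1^1)[2]\to\Mod(S_{1,1})[2]\to 1$ cannot split at all, since $\Mod(S_1^1)[2]$ is a subgroup of $B_3$ and hence torsion free while $\Gamma(2)\times\Z$ is not; so "absorbed at level two'' is false, because level two does not kill $-I$ (level $\ell\geq 3$ would). Any repair of your route has to confront these low-complexity genus~$1$ cases separately (they are delicate in the paper as well: e.g.\ the center of $\Mod(S_1^1)[2]\cong P_3$ is generated by a square root of the boundary twist, not by the boundary twist itself).

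For comparison, the paper's proof is not cohomological at all: it shows directly that the level two group retracts onto its center. For $g=1$ it uses that $\Mod(S_1^1)[2]$ is the pure braid group $P_3$, which splits over its infinite cyclic center with retraction $s$, and then assembles the retraction of $\Mod(S_{1,n}^b)[2]$ onto $\Z^b$ as the product of the compositions $h_i\circ s\circ f_i$, where $f_i$ forgets the marked points and caps all boundary components but the $i$th, and $h_i$ sends the central generator back to the $i$th boundary twist; for $g=0$ the role of $P_3$ is played by $\Mod(S_{0,2}^1)[2]\cong\Z$. This is close in spirit to your fallback suggestion of building a section by hand, but it avoids both torsion-freeness and any moduli-space geometry; if you want to salvage the obstruction-theoretic picture, you would need either to work with orbifold cohomology and handle the $2$-torsion class honestly, or to add enough marked points (or pass to level $\geq 3$) before trivializing.
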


\begin{proof}

It suffices to show that $\Mod(S_{g,n}^b)[2]$ splits as a direct product over its center, the free abelian group generated by the Dehn twists about the $b$ boundary components of $S_{g,n}^b$.  The theorem then follows from the fact that the natural inclusion $S_{g,n}^b \to S_{g,n+b}$ induces a well-defined surjective map of level two mapping class groups, and that the kernel of this map is the center \cite[Proposition 3.19]{primer}.

First we deal with the case $g=1$.  The group $\Mod(S_1^1)[2]$ is isomorphic to the pure braid group on three strands (combine \cite[Section 9.4.1]{primer} with \cite{arnold}).  The latter splits over its infinite cyclic center \cite[Section 9.3]{primer}, and so we are done in this case.  Denote by $s$ a retraction $\Mod(S_1^1)[2] \to Z(\Mod(S_1^1)[2])$ that defines the splitting.

Let $f_i : \Mod(S_{1,n}^b)[2] \to \Mod(S_1^1)[2]$ be the homomorphism obtained by forgetting the $n$ marked points and by capping each boundary component of $S_{1,n}^b$---except for the $i$th---with a disk.  Also, let $h_i : Z(\Mod(S_1^1)[2]) \to \Mod(S_{1,n}^b)[2]$ be the homomorphism that maps the Dehn twist about the boundary of $S_1^1$ to the Dehn twist about the $i$th boundary component of $S_{1,n}^b$.  The product of the maps $h_i \circ s \circ f_i$ is the desired retraction $\Mod(S_{1,n}^b)[2] \to Z(\Mod(S_{1,n}^b)[2])$.

The genus zero version is nearly identical.  The role of $\Mod(S_1^1)[2]$ is played by $\Mod(S_{0,2}^1)[2] \cong \Z$.  The only essential difference in this case is that there are many choices of homomorphism $\Mod(S_{0,n}^b)[2] \to \Mod(S_{0,2}^1)[2]$ available for each coordinate of the splitting.  
\end{proof}

\medskip

In contrast to Theorem~\ref{thm:virtual splitting}, it is known that $\Mod(S_g^1)$ does not have a finite-index subgroup that splits over its center when $g \geq 2$; see \cite[Proposition 5.10]{primer}.

\p{Acknowledgments} We would like to thank Jason Behrstock, Matt Day, Benson Farb, Thomas Koberda, Daniel Studenmund, Richard Wade, Kevin Wortman, and the referee for helpful comments and conversations.


\bibliographystyle{plain}
\bibliography{raagvsmcg}

\end{document}